\theoremstyle{thmstyleone}%
\newtheorem{theorem}{Theorem}
\newtheorem{proposition}[theorem]{Proposition}%
\newtheorem{lemma}[theorem]{Lemma}
 \newtheorem{remark}{Remark}%
\pgfplotsset{compat=1.10}
\definecolor{mycolor}{gray}{0.1} 
\definecolor{myblue}{rgb}{0.00000,0.44700,0.74100}%
\definecolor{myred}{rgb}{0.85000,0.32500,0.09800}%
\definecolor{myyellow}{rgb}{0.92900,0.69400,0.12500}%
\definecolor{mycolor4}{rgb}{0.49400,0.18400,0.55600}%
\def\r0{\mathcal{R}_0}
\newcommand{\dd}{\mathop{}\!\mathrm{d}}
\begin{document}

\title{Bistability and complex bifurcation diagrams generated by waning and boosting of immunity}

\date{\today}


\author*[1,2]{\fnm{Francesca} \sur{Scarabel}}\email{f.scarabel@leeds.ac.uk}
\equalcont{These authors contributed equally to this work.}

\author*[2,3]{\fnm{M\'onika} \sur{Polner}}\email{polner@math.u-szeged.hu}
\equalcont{These authors contributed equally to this work.}

\author[1]{\fnm{Daniel} \sur{Wylde}}

\author[2]{\fnm{Maria Vittoria} \sur{Barbarossa}}

\author[2,3]{\fnm{Gergely} \sur{R\"ost}}

\affil[1]{\orgdiv{School of Mathematics}, \orgname{University of Leeds}, \orgaddress{\street{Woodhouse}, \city{Leeds}, \postcode{LS2 9JT}, \state{United Kingdom}, \country{Country}}}


\affil[2]{\orgdiv{Bolyai Institute}, \orgname{University of Szeged}, \orgaddress{\street{Aradi v\'ertan\'uk tere 1}, \city{Szeged}, \postcode{H-6720}, \country{Hungary}}}

\affil[3]{\orgname{National Laboratory for Health Security}, \orgaddress{\street{Aradi v\'ertan\'uk tere 1}, \city{Szeged}, \postcode{H-6720}, \country{Hungary}}}


            

\abstract{
We investigate an epidemiological model that incorporates waning of immunity at the individual level and boosting of the immune system upon re-exposure to the pathogen. When immunity is fully restored upon boosting, the system can be expressed as an SIRS-type model with discrete and distributed delays.  
We conduct a numerical bifurcation analysis varying the boosting force and the maximum period of immunity (in the absence of boosting), while keeping other parameters fixed at values representative of a highly infectious disease like pertussis. 
The stability switches of the endemic equilibrium, identified numerically, are validated using an established analytical approach, confirming that the equilibrium is unstable in a bounded parameter region, and stable outside this region.
Using recently developed continuation methods for models with discrete and distributed delays, we explore periodic solutions and their bifurcations.
Our analysis significantly extends previous findings and reveals a rich dynamical landscape, including catastrophic bifurcations of limit cycles, torus bifurcations, and bistability regions where two stable periodic solutions coexist, differing in amplitude and period. 
These complex bifurcations have critical public health implications: perturbations—such as non-pharmaceutical interventions—can shift the system between attractors, leading to long-term consequences from short-term measures.
}

\keywords{SIRS model, bifurcation analysis, oscillations, torus bifurcation}



\maketitle

\section{Introduction}
Boosting of the immune system through re-exposure to an infectious pathogen plays a key role in the dynamics of diseases without lifelong immunity, affecting the success of vaccination programmes with imperfect or waning vaccine protection \citep{schuette1999modeling, heffernan2009implications, lavine2011natural, dafilis_frascoli_wood_mccaw_2012}. 

Waning and boosting of immunity are particularly challenging to describe mathematically as they introduce delayed feedback loops.
To keep the models tractable, the waning of immunity is often modeled using a chain of compartments with full or partial immunity. However, various analyses have shown that such models predict periodic outbreaks only if the chain is long enough, while an overly simplified model could disregard interesting dynamical outcomes and predict damped oscillations to an endemic equilibrium  \citep{hethcote1981nonlinear, gonccalves2011oscillations, rost2020stability}. For this reason, structured models that capture the within-host dynamics by explicitly keeping track of the time passed since infection or recovery are more appropriate than simple ordinary differential equations, allowing more flexibility in the definition of delay distributions and potentially reproducing more complex dynamics \citep{diekmann1982prelude, aron1983, barbarossa2015immuno, diekmann2018waning, kuniya, yuki}.

To describe in a general way the process of waning and boosting of immunity, \cite{barbarossa2015immuno} proposed a model where recovered individuals are structured by a simplified one-dimensional variable $z \in [z_{min},z_{max}]$ that describes the immunity level (thereby avoiding to model the complex details of the immune system response).
After natural infection, a host enters the immune compartment with maximum immunity~$z_{max}$, then their immunity decays in time with some given rate. Immunity can be boosted upon further exposure to the pathogen. When the immune level reaches the lowest threshold~$z_{min}$, the individual becomes susceptible again. 

In a later analysis, \cite{barbarossa2017stability} focused on the special case where immunity decays at a constant rate and boosting always restores the maximal level of immunity. For this reason, it is also referred to as the `MAXboost' model \citep{barbarossa_temporal_2018}. Under the assumption that the population size is constant, this model reduces to a system of two equations, for the susceptible and infected populations respectively, where the inflow into the susceptible compartment due to loss of immunity is mathematically captured by a distributed delay term. 
In this model, the maximal delay $\tau$ represents the maximal duration of immunity when boosting does not occur.
The authors showed the existence of an endemic equilibrium that undergoes multiple stability switches depending on the model parameters, with sequences of Hopf bifurcations suggesting the emergence of branches of periodic solutions. 

The linearized stability analysis of the endemic equilibrium for the MAXboost model was carried out numerically by \cite{barbarossa2017stability}. Here, we refine the analysis of stability switches of the endemic equilibrium when the maximum duration of immunity is varied, and fully characterize the stability changes that occur in the MAXboost model. The main difficulty lies in studying characteristic equations where the delay occurs in multiple ways. Delay systems in which the coefficients in the characteristic equation depend on the time delay $\tau$ only through an exponential term $\exp(-\tau d)$ are easier to study, and the theory in such cases is well developed (see, for example, \cite{Kuang1993}). One can usually explicitly compute the delay values for which stability switches occur. A systematic approach to studying characteristic equations with delay-dependent parameters has been developed by \cite{berettakuang2002} for one constant delay, and extended to multiple delay systems by \cite{Kuang_multiple_delays}. Using this method, we provide geometric and analytical criteria for stability switches with respect to the delay. In addition, this approach provides a rigorous method for finding Hopf bifurcations.

The bifurcation analysis beyond Hopf points for models with discrete and distributed delays has traditionally been limited, due to the complexity of the analytical calculations and the lack of easily accessible software tools. 
As an example, \cite{taylor2009sir} obtained analytical and numerical results on the delayed SIRS model without boosting of immunity by rigorously studying the small periodic oscillations emerging from a supercritical Hopf bifurcation, and the stable pulse oscillations that coexist with the stable endemic equilibrium near subcritical Hopf bifurcations. This is one of the first examples of bistability in infectious disease models with waning immunity. 

In the presence of boosting of immunity, bistability has also been observed by \cite{dafilis_frascoli_wood_mccaw_2012} in a SIRWS compartmental model with one intermediate waning immunity compartment. By performing a numerical bifurcation analysis, the authors showed the presence of subcritical Hopf bifurcations when varying the boosting force and the per capita mortality rate, with the emergence of regions of bistability of the endemic equilibrium with a stable limit cycle. \cite{kuhn} then rigorously showed existence of oscillations in the fast-slow SIRWS model proposed by \cite{dafilis_frascoli_wood_mccaw_2012} using geometric singular perturbation theory. \cite{childs} further extended the analyses of bistability in the SIRWS model, showing that the region of bistability is relatively small for biologically realistic parameters.
\cite{opoku-sarkodie_dynamics_2022} generalised the SIRWS model by allowing different expected durations for individuals being in the fully immune compartment and being in the waning immunity compartment, from where their immunity can still be restored upon re-exposure. The modified model exhibits rich dynamics and displays additional complexity with respect to the symmetric partitioning.

In this paper, we further expand the analysis of the MAXboost model by using a recently developed delay equation importer for MatCont~7p6 for MATLAB \citep{liessi2025matcont}, that allows to perform the numerical bifurcation analysis of equations with discrete and distributed delays. 
Our analysis captures a richer dynamical behaviour than previous ones, including those from \cite{hethcote1981nonlinear}, \cite{taylor2009sir} and \cite{dafilis_frascoli_wood_mccaw_2012}, even in the absence of boosting. In particular, we find a series of catastrophic bifurcations that originate from the switch between supercritical and subcritical Hopf bifurcations when varying the boosting force and the maximal duration of immunity, which can result in drastic changes in the epidemiological situation due to small perturbations of parameters. 
We find interesting bifurcations on the branches of periodic solutions, including Neimark--Sacker bifurcations and the emergence of stable tori, highlighting how waning and boosting of immunity lead to a very complex dynamical landscape. Most importantly, we identify regions of parameters that exhibit bistability, both of one equilibrum and a periodic solutions and of two distinct periodic solutions for the same parameter values. The existence of coexisting stable periodic solutions differing in amplitude and period has important consequences in public health, as perturbations due for instance to the implementation of public health interventions could push the system to a different attractor.

The paper is structured as follows. In Section~\ref{s:model}, we briefly summarise the MAXboost model under consideration and formally study the stability switches of the endemic equilibrium depending on the maximal duration of immunity when not boosted. 
In Section~\ref{s:num_bif}, we perform a numerical bifurcation analysis with respect to the length of the immune period and the boosting force. We also focus on the special case of waning immunity but no boosting, showing in this case, too, the existence of multiple stability switches and branches of stable periodic solutions with different amplitudes and periods. Finally, we briefly comment on the impact of the expected lifetime on stability.
In the last section, we discuss the results and the implications of our findings in the context of public health interventions. 


\section{The mathematical model}\label{s:model}
We consider the model studied by \cite{barbarossa2017stability} for the spread of an infectious pathogen that confers temporary immunity. 
Let $S$, $I$ and $R$ be the fraction of susceptible, infected, and immune individuals in the population, respectively, so that $S+I+R=1$. Let $d>0$ be the per capita natural mortality rate of individuals, equal to the total population birth rate. Let $\beta$ and $\gamma$ denote respectively the per capita transmission rate and the recovery rate of infected individuals. 
Upon recovery, individuals are completely immune to the disease, but they re-enter the susceptible class after time $\tau>0$, unless their immune system is boosted before time $\tau$. If immune individuals enter in contact with the pathogen, their immune system can be boosted to the maximal level at a rate $\beta\nu$, where $\nu \geq 0$ represents the boosting force. 
Hence, the rate at which individuals re-enter the class $S$ at time $t$ is given by the individuals who had maximal immunity at time $t-\tau$, and their immune system has not been boosted between $t-\tau$ and $t$. 
Specifically, at time $t-\tau$ there are two cohorts of individuals who acquire maximal immunity: those who recover naturally, at total rate $\gamma I(t-\tau)$, and those whose immunity has been boosted, at rate $\nu\beta I(t-\tau)R(t-\tau)$. The probability that individuals in these cohorts do not die nor receive an immunity boost between $t-\tau$ and $t$ is given by $e^{-d\tau -\nu \beta \int_{-\tau}^{0} I(t+u) \dd u}$. The total inflow into $S$ at time $t$ is therefore given by
\begin{equation*}
    \left[ \gamma I(t-\tau) + \nu\beta I(t-\tau)R(t-\tau)\right] \times \exp \left( -d\tau -\nu \beta \int_{-\tau}^{0} I(t+u) \dd u \right).
\end{equation*}
Since the population is constant, we can write $R=1-S-I$, hence the full model is described by the two equations
\begin{equation}
\begin{aligned}
\dot S(t) & = d(1-S(t)) -\beta I(t)S(t)\\
& \quad +I(t-\tau) \left(\gamma + \nu\beta \left(1-S(t-\tau)-I(t-\tau)\right)\right)
     \exp \left( -d\tau - \nu\beta\int_{t-\tau}^{t}I(u)\,\dd u\right)\\
     \dot I(t) & = \beta I(t)S(t) -(\gamma+d) I(t).
\end{aligned} \label{sys:SISdelay}
\end{equation}
We refer to \cite{barbarossa2015immuno} for an equivalent formulation of the model using partial differential equations. 

Note that, if $\nu\in[0,1]$, immune boosting following secondary exposure occurs at a lower rate than primary infection. Conversely, if $\nu\ge 1$, boosting can occur through exposures that would not induce infection in a susceptible host \citep{lavine2011natural}. 
When $\nu\to 0$, the model reduces to a SIRS model with no boosting of immunity. 
When $\nu \to +\infty$, the model reduces to a SIR model with no waning of immunity. 

System \eqref{sys:SISdelay} always admits the disease-free equilibrium $(S^*,I^*)=(1,0)$, which is globally asymptotically stable when $\r0<1$, where $\r0$ is the basic reproduction number, defined by
\begin{equation}\label{R0}
    \r0 = \frac{\beta}{\gamma+d}.
\end{equation}
If $\r0>1$, the disease-free equilibrium is unstable and a unique endemic equilibrium $(S^*,I^*)$ exists, with $S^* = \r0^{-1}$ and $I^*>0$ implicitly defined by the equation 
\begin{equation*}
    d(1-S^*) -\beta I^*S^*+\left(\gamma + \nu\beta \left(1-S^*-I^*\right)\right)
     I^* e^{ -d\tau - \nu\beta\tau I^*} = 0.
\end{equation*}
The endemic equilibrium can be stable or unstable depending on parameters.
\cite{barbarossa2017stability} performed a numerical analysis of the stability boundaries of the endemic equilibrium varying the parameters $\nu$ and $\tau$, with all other parameters fixed so that $\r0>1$, using TRACE-DDE, a MATLAB computational tool \citep{breda2009trace}. The analysis shows that the equilibrium destabilizes through Hopf bifurcations. However, no information is given about the dynamical behaviour when the endemic equilibrium is unstable.

\subsection{Stability switches of the endemic steady state}\label{sec:stability switches}
In this section, based on the method developed by \cite{berettakuang2002}, we study the occurrence of possible stability switches of the endemic equilibrium as a result of increasing the delay~$\tau$. 

Recall from \cite{barbarossa2017stability} that linearizing the system \eqref{sys:SISdelay} about the endemic equilibrium point $(S^*,I^*)$, with $I^*=I^*(\tau)\geq 0$, results in the characteristic equation
\begin{equation*}
    \begin{aligned}
    \lambda^3  & = -\lambda \beta(\gamma+d) I^* -\lambda^2 (d+ \beta I^*) - \lambda^2 e^{-\lambda \tau} \nu \mu\\
    & \quad + \lambda e^{-\lambda \tau} \mu \bigl( \sigma  - \nu\beta I^*\bigr)
    - \nu \beta I^* \sigma \mu+ e^{-\lambda \tau} \nu\beta I^*  \sigma \mu,
    \end{aligned}
\end{equation*}
where
\begin{equation}
    \begin{aligned} 
	\mu=\mu(\tau) & = \beta I^* e^{-\tau(d+ \nu\beta I^*)},\\
	\sigma=\sigma(\tau) &=\gamma + \nu\beta (1 -1/\r0-I^*).
    \end{aligned}\label{def:mu_sigma}
\end{equation}
This can be written in the form
\begin{equation*}
    W(\lambda, \tau)\equiv P(\lambda,\tau) +Q(\lambda,\tau) e^{-\lambda \tau}=0, 
\end{equation*}
where
\begin{align*}
    P(\lambda,\tau) & = \lambda^3 + \lambda^2 (d+ \beta I^*)+ \lambda \beta (\gamma+d)I^*+ \nu\beta I^*\mu\sigma,\\[0.2em]
    Q(\lambda, \tau) & =  \bigl(\lambda^2  \nu  - \lambda  \bigl( \sigma - \nu\beta I^*\bigr)- \nu \beta I^*  \sigma\bigr) \mu,
\end{align*}
and $\mu,\,\sigma$ as in \eqref{def:mu_sigma}. Note here the dependence of the polynomials $P$ and $Q$ on the delay, which appears in their coefficients not only explicitly in the term $e^{-\tau d}$, but also implicitly through the equilibrium component $I^*$. A systematic approach to study characteristic equations with delay-dependent parameters was developed by \cite{berettakuang2002}. We give here a brief summary of their technique as it applies to our problem. 

We look for purely imaginary roots, $\lambda = i\omega$, $\omega>0$ of $W(\lambda,\tau)$. Separating the real and imaginary parts in $W(i\omega,\tau)=0$ results in 
\begin{equation}\label{sin-cos-omega}
    \sin(\omega \tau) = \text{Im}\left(\frac{P(i\omega,\tau)}{Q(i\omega,\tau)} \right),\quad
    \cos(\omega \tau) = -\text{Re}\left(\frac{P(i\omega,\tau)}{Q(i\omega,\tau)} \right).
\end{equation}
From the characteristic equation, $P(i\omega,\tau)=-Q(i\omega,\tau)e^{-i\omega\tau}$ therefore, if $\omega$ satisfies~\eqref{sin-cos-omega}, then it must be a positive root of the polynomial 
\begin{equation}\label{eq:Fomega}
   F(\omega,\tau)\equiv|P(i\omega,\tau)|^2-|Q(i\omega,\tau)|^2 = \omega^2\left(\omega^4 +a_1(\tau) \omega^2 +a_0(\tau)\right),
\end{equation} 
where 
\begin{equation*}
    \begin{split}
    a_1(\tau) & =  d^2+ \beta^2 (I^*)^2 - 2\beta\gamma I^* -\nu^2  \mu^2,\\
    a_0(\tau) & =  (\beta I^*(\gamma+d))^2 -2\nu \beta I^*(d+ \beta I^*)\mu \sigma -(\sigma-\nu\beta I^*)^2\mu^2-2 \nu^2  \mu^2\beta I^*  \sigma.
\end{split}
\end{equation*}
Since the coefficients depend on the delay, we have that $\omega=\omega(\tau)$. We find that $\lambda=i\omega$ solves $W(i\omega,\tau)=0$ if and only if $\omega$ is a root of $F$, which in turn can have zero, one, or two positive $\omega(\tau)$ roots.

Assume that $J\subseteq \mathbb{R}^+$ is the set of all time delays for which $\omega(\tau)$ is a positive root of $F$, that is
\begin{equation*}
    J=\left\{\tau:\tau\geq0,\  \omega(\tau)\text{ is a positive simple root of } F(\omega,\tau)  \right\}.
\end{equation*}
For any $\tau\in J$, where $\omega(\tau)$ is a positive root of $F(\omega(\tau),\tau)$, we can define the angle $\theta(\tau)\in [0,2\pi]$, such that $\sin\theta(\tau)$ and $\cos\theta(\tau)$ are given by the right-hand side of \eqref{sin-cos-omega}, i.e.,
\begin{equation}\label{angles}
    \sin\theta(\tau) = \text{Im}\left(\frac{P(i\omega,\tau)}{Q(i\omega,\tau)} \right),\quad
    \cos\theta(\tau) = -\text{Re}\left(\frac{P(i\omega,\tau)}{Q(i\omega,\tau)} \right).
\end{equation}
Hence, for all $\tau\in J$ we must have $\omega(\tau)\tau=\theta(\tau)+2\pi n$. Define the maps $\tau_n: J\to \mathbb{R}^+$ as
\begin{equation*}
    \tau_n(\tau) = \frac{\theta(\tau)+2\pi n}{\omega(\tau)},\quad n\in\mathbb{N}_0,
\end{equation*}
where $\omega(\tau)$ is a positive solution of $F(\omega,\tau)$. Finally, introduce the functions 
\begin{equation*}
    S_n:J\to\mathbb{R},\quad S_n(\tau) = \tau-\tau_n(\tau), \quad n\in\mathbb{N}_0.
\end{equation*}
Then Theorem 2.2 in \cite{berettakuang2002} states that, if $\omega(\tau)$ is a positive root of $F(\omega,\tau)$ in \eqref{eq:Fomega} defined for $\tau\in J$, and at some $\tau^*\in\ J$ we have that 
\begin{equation*}
      S_n(\tau^*)=0 \quad \text{for some } n\in\mathbb{N}_0,
\end{equation*}
then a simple pair of conjugate pure imaginary roots $\lambda_\pm(\tau^*)=\pm i\omega(\tau^*)$ of $W(\lambda,\tau)=0$ exists at $\tau=\tau^*$ that crosses the imaginary axis. The theorem also gives conditions that determine the direction of the crossing. In particular, let 
    \begin{equation}\label{sign_crossing}
        \delta(\tau^*)=\text{sign } \left\{ \frac{\dd\text{Re}(\lambda)}{\dd\tau}\mid_{\lambda=i\omega(\tau^*)}\right\}= \text{sign }\left\{F'_\omega(\omega(\tau^*),\tau^*)\right\} \text{sign }\left\{ \frac{\dd S_n(\tau)}{\dd\tau}\mid_{\tau=\tau^*}\right\}.
    \end{equation}
 Then, the pair of eigenvalues crosses the imaginary axis from left to right if $\delta(\tau^*)>0$, and from right to left if $\delta(\tau^*)<0$.

Let us return to our model. We can give explicit conditions on the coefficients of the polynomial $F(\omega,\tau)$ in \eqref{eq:Fomega}, as functions of $\tau$, such that $F$ admits one or two positive $\omega(\tau)$ roots. In the next proposition, we characterize the set of time delays for which positive roots $\omega^\pm(\tau)$ exist. 

\begin{proposition}
    For the dynamical system \eqref{s:model}, the delay interval for which $F(\omega,\tau)$ has positive $\omega(\tau)$ roots is finite.
\end{proposition}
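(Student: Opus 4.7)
The claim reduces to showing that for $\tau$ sufficiently large, the quadratic $x^2 + a_1(\tau) x + a_0(\tau)$ (in $x = \omega^2$) has no positive real roots. Since its leading coefficient is $1$ and, as I shall argue, $a_0(\tau)$ is eventually positive, the standard discriminant/Vieta criterion rules out positive roots provided either $a_1(\tau) \geq 0$, or $a_1(\tau)^2 < 4 a_0(\tau)$. The plan is to compute the $\tau\to\infty$ limits of $a_0$ and $a_1$, verify the criterion on the limit, and then invoke continuity.

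\textbf{Step 1: asymptotics of $I^*(\tau)$ and $\mu(\tau)$.} Using $S^* = 1/\r0$ and $\beta = \r0(\gamma+d)$, the implicit equation defining $I^*$ reads
\begin{equation*}
    d(1 - 1/\r0) - (\gamma+d) I^* + \bigl(\gamma + \nu\beta(1 - 1/\r0 - I^*)\bigr) I^* \, e^{-d\tau - \nu\beta\tau I^*} = 0.
\end{equation*}
Since $I^*\in(0, 1 - 1/\r0)$ uniformly in $\tau$, the exponential factor is dominated by $e^{-d\tau}$ and vanishes as $\tau\to\infty$; passing to the limit yields $I^*(\tau) \to I^*_\infty := d(\r0-1)/\beta > 0$. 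Consequently $\mu(\tau) = \beta I^*(\tau) e^{-\tau(d+\nu\beta I^*(\tau))} \to 0$ exponentially, while $\sigma(\tau)$ converges to a finite limit.

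\textbf{Step 2: limiting coefficients and the criterion.} Every term of $a_0(\tau)$ and $a_1(\tau)$ beyond those that are independent of $\mu$ carries a factor $\mu$ or $\mu^2$, so in the limit
\begin{equation*}
    a_0^\infty = (\beta I^*_\infty(\gamma+d))^2 > 0, \qquad a_1^\infty = d^2 + (\beta I^*_\infty)^2 - 2\beta\gamma I^*_\infty.
\end{equation*}
Writing $u := \beta I^*_\infty$, one has $\sqrt{a_0^\infty} = u(\gamma+d)$. If $a_1^\infty \geq 0$, the criterion is satisfied; if $a_1^\infty < 0$, then $|a_1^\infty| = 2\gamma u - d^2 - u^2$ and the required inequality $|a_1^\infty| < 2u(\gamma+d)$ reduces to $-(d+u)^2 < 0$, which is immediate. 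By continuity of $a_0$ and $a_1$ in $\tau$, the strict inequality $a_0^\infty > 0$ together with $a_1^\infty \geq 0$ or $(a_1^\infty)^2 < 4 a_0^\infty$ persists on some half-line $[\bar\tau, \infty)$, so $J \subseteq [0, \bar\tau]$ is bounded.

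\textbf{Main obstacle.} The only non-algebraic ingredient is the convergence $I^*(\tau) \to I^*_\infty$; the subtle point is to exclude $I^*(\tau) \to 0$ along a subsequence. I would handle this by contradiction: if such a subsequence existed, the implicit equation would in the limit reduce to $d(1 - 1/\r0) = 0$, contradicting $\r0 > 1$. Hence $I^*(\tau)$ is uniformly bounded away from $0$, the exponential factor does vanish as claimed, and the rest of the argument is the short discriminant computation combined with a standard continuity argument.
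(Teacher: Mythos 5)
Your proof is correct and follows essentially the same route as the paper's: both pass to the limit $\tau\to\infty$ (where $\mu\to 0$ and $I^*\to d(\mathcal{R}_0-1)/\beta$), observe that $a_0^\infty>0$, and exclude positive roots for large $\tau$ by showing that $a_1^\infty<0$ forces a negative discriminant. Your identity reducing $(a_1^\infty)^2-4a_0^\infty<0$ to $(d+\beta I^*_\infty)^2>0$ is a slightly cleaner derivation of the paper's conclusion $D^\infty<0$, and your continuity argument condenses the paper's more detailed tracking, in the $(a_0,a_1)$-plane, of how the roots collide at a zero of $D$.
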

\begin{proof}
    First, we specify the delay intervals for which one or two positive (distinct) roots $\omega(\tau)$ of $F$ exist. It is straightforward to see that, if $a_0(\tau^*)<0$ for some $\tau^*>0$, then $F$ has exactly one $\omega^+(\tau^*)>0$ root, and, when $a_0(\tau^*)>0$, then under some additional conditions $F$ has exactly two $\omega^\pm(\tau^*)$ distinct positive roots. For a better understanding, we plotted in Figure~\ref{fig:roots_partition} all possible outcomes for the number of roots. This figure divides the $(a_0,a_1)$-plane according to the number of positive roots of $F$, and it also marks with a dashed line the boundary that separates regions where roots exist and where not. Along the dashed line, there are no admissible roots. Define the sets where positive roots of $F$ exist by
    \begin{equation*}
    J^- =\left\{\tau : \tau\geq0, \ a_0(\tau)>0 \ \text{and}\ D(\tau)>0  \ \text{and}\ a_1(\tau)<0\right\}
    \end{equation*}
and 
\begin{equation*}
    J^+ =J^- \cup \left\{\tau: \tau\geq0, \ a_0(\tau)<0\right\}\cup \left\{\tau: \tau\geq0, \ a_0(\tau)=0 \ \text{and}\ a_1(\tau)<0\right\},
\end{equation*}
where $D(\tau)=a_1^2(\tau)-4a_0(\tau)$. Hence for $\tau\not\in J^+\cup J^-$, $\omega(\tau)$ is not defined. It is clear that if $\tau\in J^+\setminus J^-$, then $F$ has only one $\omega^+(\tau)>0$ root, whereas if $\tau\in J^-$, then both $\omega^+(\tau)>0$ and $\omega^-(\tau)>0$ exist. Note that, due to the continuity of the roots with respect to $\tau$, $J^+$ and $J^-$ are (connected) intervals. For the remainder of this paper, let $J^+=(\tau^+_{min},\tau^+_{max})$ (or $J^+=[0,\tau^+_{max})$) and $J^-=(\tau^-_{min},\tau^-_{max})$ (or $J^-=[0,\tau^-_{max})$). We need to show that $\tau^\pm_{max}<\infty$. 
    \begin{figure}
        \centering
        \hspace*{-1cm}
        \includegraphics[scale=0.25]{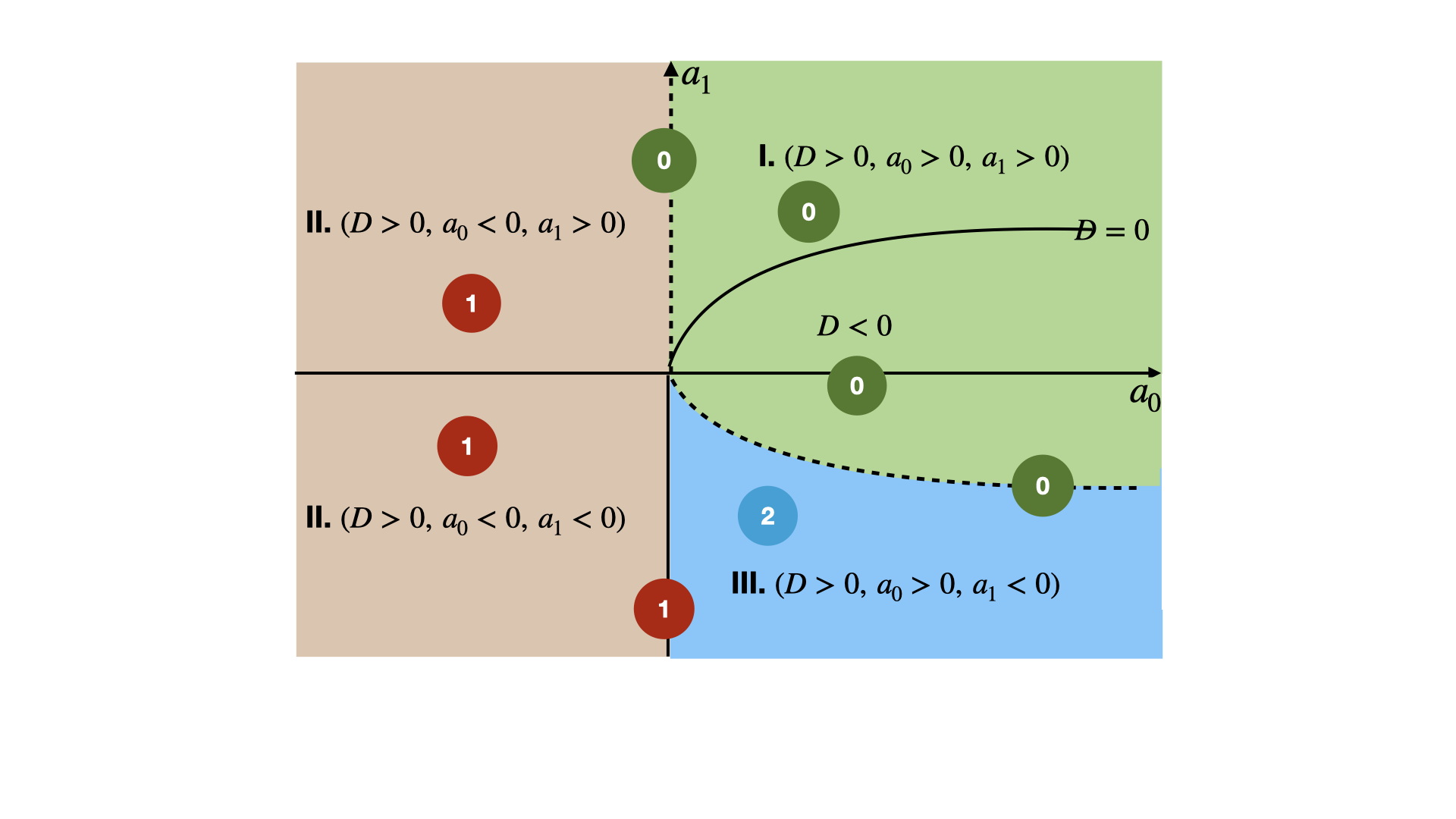}
        \caption{The partition of the $(a_0,a_1)$-plane according to the number of positive distinct roots $\omega(\tau)$ of $F(\omega,\tau)=0$ in \eqref{eq:Fomega}. The dashed curve is the boundary separating the region where there is at least one admissible root (marked by II. and III.) from the region where there are no roots (marked by I.).}
        \label{fig:roots_partition}
    \end{figure}

Consider the limit $\tau\to\infty$, which means that immunity is lifelong. In this case, the system reduces to an SIR system, and if there is demography ($d>0$), then there is a unique endemic equilibrium when $\r0>1$, 
\begin{equation*}
    S^*_\infty=\frac{1}{\r0},\quad I^*_\infty=\frac{d}{\beta}(\r0-1).
\end{equation*}
In this limit, we also have that 
\begin{equation}\label{a_D_infinity}
    \begin{split}    
    a_0^\infty &= \lim_{\tau\to\infty} a_0(\tau) = {d}^{2} \left( -\beta +\gamma +d \right) ^{2}>0, \\
    a_1^\infty &= \lim_{\tau\to\infty} a_1(\tau) = \frac{d}{(\gamma+d)^2}\left[ d\beta^2+2(\gamma+d)^2(\gamma+d-\beta)\right], \\
    D^\infty &= \lim_{\tau\to\infty} D(\tau) = \frac{d^3\beta^2}{(\gamma+d)^2}\left[ d\beta^2+4(\gamma+d)^2(\gamma+d-\beta)\right].
\end{split}
\end{equation}
Since $a_0^\infty>0$, there exists $\tau_1>0$ such that $a_0(\tau)>0$ for all $\tau>\tau_1$, i.e., if $\tau$ is large enough, we arrive and stay in the right half of the $(a_0,a_1)$-plane in Figure~\ref{fig:roots_partition}. Since the roots $\omega(\tau)$ are continuous, this implies that either there are no positive roots for $\tau>\tau_1$ (in domain I.), then $\tau_{max}^+=\tau_1$, or there are two distinct ones (in domain III.). In the second case, due to continuity, $a_1(\tau)<0$ for $\tau>\tau_1+T$ for some $T>0$ or $T=\infty$. 

If $T<\infty$, i.e., $a_1(\tau)$ changes sign at $\tau_1+T$, then again by continuity, this is only possible if we first cross the $D(\tau)=0$ curve at some $\tau$ value, i.e., there exists a $\tau_2<\tau_1+T$, such that $D(\tau_2)=0$. At this value of the delay, the two roots collide, yielding that $\tau_{max}^+=\tau_{max}^-=\tau_2$.

If $T=\infty$, i.e., $a_1(\tau)<0$ for all $\tau>\tau_1$, then in \eqref{a_D_infinity}, $a_1^\infty\leq0$. Since $\r0>1$, it follows that $D^\infty<0$, which implies due to the continuity of $D(\tau)$ that there exists $\tau_3>\tau_1$ such that $D(\tau_3)=0$. Therefore, $\tau_{max}^+=\tau_{max}^-=\tau_3$, which completes the proof.
    
\end{proof}

If only $\omega^+(\tau)$, $\tau\in J^+$ is feasible, then stability switches can occur only at the roots of $S_n^+(\tau)$. However, if both $\omega^+(\tau)$ and $\omega^-(\tau)$ are feasible for $\tau\in J=J^-$, then switches can occur at the zeros of the following two sequences of functions 
\begin{equation}\label{Sn_pm}
    S_n^+(\tau)=\tau-\frac{\theta^+(\tau)+2\pi n}{\omega^+(\tau)},\quad
    S_n^-(\tau)=\tau-\frac{\theta^-(\tau)+2\pi n}{\omega^-(\tau)},\quad n\in\mathbb{N}.
\end{equation}
Here the angles $\theta^+(\tau)$ and $\theta^-(\tau)$ are the solutions of \eqref{angles} corresponding to $\omega^+(\tau)$ and $\omega^-(\tau)$, respectively. Clearly, if $\omega^+(\tau)>\omega^-(\tau)$, then the following monotonicity properties hold for all $n\in\mathbb{N}_0$, $\tau\in J$,
\begin{equation}\label{monotonicity_S}
        S_n^+(\tau)>S_{n+1}^+(\tau),\ S_n^-(\tau)>S_{n+1}^-(\tau) \quad \text{and} \quad
        S_n^+(\tau)>S_{n}^-(\tau).
\end{equation}

\begin{remark}\label{remark_sign_f_derivative}
    Suppose that $\omega(\tau)$ is a root of $F(\omega(\tau),\tau)=0$ for $\tau\in J$. Since
    \begin{equation*}
        F'_\omega(\omega(\tau),\tau) = 2\omega(\tau) (2\omega^2(\tau)+a_1(\tau)),
    \end{equation*}
    it is straightforward to see that $F'_\omega(\omega^+(\tau),\tau) >0$ for all $\tau\in J^+$ and $F'_\omega(\omega^-(\tau),\tau) <0$ for all $\tau\in J^-$. Then we can determine the direction of which eigenvalues cross the imaginary axis, i.e., the sign of $\delta(\tau^*)$ in \eqref{sign_crossing}, only by analyzing the derivative of $S_n^+(\tau)$ and $S_n^-(\tau)$ at $\tau=\tau^*$, respectively.
\end{remark}
In the next lemma we describe, using Figure~\ref{fig:roots_partition}, how stability switches can occur, which will also determine the direction in which eigenvalues cross the imaginary axis. 

\begin{lemma}\label{lem:switches}
   The endemic equilibrium may undergo stability changes, and eventually it becomes stable for any value of the boosting $\nu$.      
\end{lemma}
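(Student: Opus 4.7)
The plan is to decompose the lemma into its two assertions and handle each using the machinery set up in Section~\ref{sec:stability switches}. The first assertion (that stability switches may occur) is essentially a direct consequence of Beretta--Kuang's theorem applied to the sequences $S_n^\pm$ in \eqref{Sn_pm}; the second assertion (eventual stability for every $\nu\ge 0$) is the genuine content, and it rests on the boundedness of $J^+$ established in the preceding proposition together with the stability of the endemic equilibrium of the limiting SIR system.

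For the first assertion, I would argue that whenever $J^\pm$ is non-empty, the zeros of $S_n^\pm(\tau)$ inside $J^\pm$ correspond to values of the delay at which a simple conjugate pair of pure imaginary eigenvalues $\pm i\omega^\pm(\tau^*)$ crosses the imaginary axis. By Remark~\ref{remark_sign_f_derivative}, the sign of the crossing is entirely determined by the sign of $\mathrm{d}S_n^\pm/\mathrm{d}\tau$ at $\tau^*$: positive for $S_n^+$ and negative for $S_n^-$. Coupled with the monotonicity property \eqref{monotonicity_S}, this allows a finite, alternating sequence of left-to-right and right-to-left crossings, so that the endemic equilibrium can destabilize and re-stabilize as $\tau$ varies in $J^\pm$.

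For the second (and main) assertion, I would use the preceding proposition: $J^+$ is a bounded interval $(\tau_{min}^+,\tau_{max}^+)$ (or $[0,\tau_{max}^+)$), so for every $\tau>\tau_{max}^+$ the polynomial $F(\omega,\tau)$ has no positive roots, and hence $W(\lambda,\tau)=0$ has no purely imaginary roots. Thus no further crossings of the imaginary axis can occur for arbitrarily large $\tau$. Next I would pass to the limit $\tau\to\infty$: the exponential factor in \eqref{sys:SISdelay} vanishes, the inflow into $S$ from the recovered compartment disappears, and the system reduces to the classical SIR model with demography, whose endemic equilibrium is locally asymptotically stable whenever $\r0>1$ (a standard Routh--Hurwitz check on the $2\times 2$ Jacobian). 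Since the eigenvalues depend continuously on $\tau$ and no crossing occurs beyond $\tau_{max}^+$, the number of unstable eigenvalues is constant on $(\tau_{max}^+,\infty)$, and it must equal the number at $\tau=\infty$, namely zero. Therefore the endemic equilibrium is stable for all sufficiently large $\tau$, independently of $\nu$.

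The main technical obstacle I anticipate is the continuity-in-$\tau$ argument in the last step: the characteristic equation $W(\lambda,\tau)=0$ is a transcendental equation whose coefficients themselves depend on $\tau$ through $I^*(\tau)$ and $\mu(\tau)$, so one has to be slightly careful when asserting that the spectrum varies continuously and that no eigenvalue can escape to $-\infty$ along the real axis or come in from infinity through the right half-plane. This is handled by standard perturbation results for delay equations and by noting that $|Q(\lambda,\tau)|/|P(\lambda,\tau)|\to 0$ as $|\lambda|\to\infty$ uniformly in $\tau$ on bounded intervals, so all but finitely many roots stay bounded away from the imaginary axis; the remaining finitely many roots inherit stability directly from the SIR limit, which is the step I expect to require the most care.
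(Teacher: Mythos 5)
Your treatment of the first assertion is fine and coincides with the paper's: zeros of $S_n^\pm$ give simple imaginary pairs, Remark~\ref{remark_sign_f_derivative} reduces the crossing direction to the sign of $(S_n^\pm)'$, and \eqref{monotonicity_S} organises the crossings. For the second assertion, however, you take a genuinely different route, and it contains a gap. The paper never anchors the eigenvalue count at $\tau=\infty$: it anchors it at $\tau=0$ (where the equilibrium is stable) and then shows, by examining the boundary behaviour of the $S_n^\pm$ on $\partial J^\pm$ (each $S_n^\pm\to-\infty$ where $\omega^\pm\to 0^+$, and $S_n^+$, $S_n^-$ share a common limit where the two roots collide at $D=0$), that every rightward crossing is paired with a later leftward one — each $S_n^\pm$ has an even number of sign changes, or the odd counts of $S_n^+$ and $S_n^-$ sum to an even total. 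Hence the net number of eigenvalues that have moved into the right half-plane over $J$ is zero, and since no crossings occur for $\tau>\tau_{max}$, stability follows entirely from finite-$\tau$ considerations.

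Your alternative anchor — local asymptotic stability of the SIR limit — requires the spectrum of the delay system to converge to that of the limit system as $\tau\to\infty$, and this is a singular limit that your argument does not actually control. The uniformity you invoke ($|Q|/|P|\to 0$ as $|\lambda|\to\infty$ ``uniformly in $\tau$ on bounded intervals'') is the wrong uniformity: the step you need is that a constant number $k>0$ of roots with $\mathrm{Re}\,\lambda>0$ on $(\tau_{max}^+,\infty)$ would have to converge, as $\tau\to\infty$, to roots of the limiting equation in the closed right half-plane, which requires a Rouch\'e-type argument uniform in $\tau\to\infty$ (feasible on the closed right half-plane since $\mu(\tau)\to 0$ exponentially and $|e^{-\lambda\tau}|\le 1$ there, but not supplied). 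Moreover the limiting quasi-polynomial is $\lambda\bigl(\lambda^2+(d+\beta I^*_\infty)\lambda+\beta(\gamma+d)I^*_\infty\bigr)$: there is a persistent root at $\lambda=0$ (indeed $W(0,\tau)=P(0,\tau)+Q(0,\tau)=0$ for every $\tau$), so ``zero unstable eigenvalues at infinity'' is not just a Routh--Hurwitz check on the $2\times 2$ Jacobian — you must also rule out roots accumulating on the imaginary axis other than this simple zero. None of this is fatal to your strategy, but as written the key step is asserted rather than proved; the paper's parity argument buys you the conclusion without ever leaving finite $\tau$.
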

\begin{proof}
We prove this lemma in more steps, depending on the number of feasible roots $\omega(\tau)$ of $F(\omega(\tau),\tau)$. 

\emph{Case 1.} Assume that $J^-=\emptyset$, i.e., only one root $\omega^+(\tau)$ is feasible for $\tau\in J^+=(\tau_{min},\tau_{max})$ (or $J^+=[0,\tau_{max})$). We need to show that for all $\tau$ larger than the last zero of $S_n^+(\tau)$ for some $n\in\mathbb{N}_0$, all eigenvalues have passed and remain on the left half of the imaginary axis. 

If $\tau_{min}=0$, then from \eqref{Sn_pm} it follows that $S_n^+(\tau_{min})<0$ for all $n\in\mathbb{N}_0$. If $\tau_{min}>0$, then it is determined by the conditions $a_0(\tau_{min})=0$ and $a_1(\tau_{min})>0$, see also Figure~\ref{fig:roots_partition}. Therefore,
\begin{equation*}
    \lim_{\tau\downarrow\tau_{min}}\omega^+(\tau)=0^+\text{ and } \lim_{\tau\downarrow\tau_{min}}S_n^+(\tau)=-\infty.
\end{equation*}
Since $J^+$ is finite, $\tau_{max}$ is also determined by the condition $a_0(\tau_{max})=0$ and $a_1(\tau_{max})>0$.  Then we have that 
\begin{equation*}
    \lim_{\tau\uparrow\tau_{max}}\omega^+(\tau)=0^+\text{ and } \lim_{\tau\uparrow\tau_{max}}S_n^+(\tau)=-\infty.
\end{equation*}
This, together with the monotonicity property of $S_n^+$ in \eqref{monotonicity_S}, implies that $S_n^+$ can only have an even number of sign changes for each $n$, so there is an even number of possible stability switches on $J^+$. Moreover, since the equilibrium is stable at $\tau=0$, the first switch is towards unstable. From Remark~\ref{remark_sign_f_derivative}, we have that $\delta(\tau^*)>0\ (<0)$ whenever $(S^+_n)'(\tau^*)>0\ (<0)$, so by the monotonicity property of $S_n^+$, it is straightforward to see that the last switch is at $\tau=\tau^*_{2m}$, the zero of $S_0^+(\tau)$, and that $\delta(\tau^*_{2m})<0$, which completes this part of the proof.   

\emph{Case 2.} Suppose $J^-\not=\emptyset$ and $\tau_{min}^+<\tau_{min}^-$. Then the switches in the interval $\tau\in(\tau_{min}^+,\tau_{min}^-)$ are determined by the zeros of $S_n^+(\tau)$. This means that as we increase $\tau$ in this interval, only $a_1(\tau)$ changes sign, which does not affect the existence of $\omega^+(\tau)$. At $\tau_{min}^-$ we have $a_0(\tau_{min}^-)=0$, $a_1(\tau_{min}^-)<0$ and $a_0(\tau)$ changes sign as we continue to increase $\tau$. Thus we enter region III in Figure~\ref{fig:roots_partition}, where both $\omega^+$ and $\omega^-$ exist up to $\tau_{max}^-$. Two things can happen here, either the two roots collide and then disappear, or only $\omega^+$ remains feasible as we further increase $\tau$. In the first case $\tau_{max}^+=\tau_{max}^-=\tau_{max}$, $D(\tau_{max})=0$ and $D(\tau)<0$ for $\tau>\tau_{max}$, and in the second case $a_0(\tau_{max}^-)=0$ and $a_1(\tau_{max}^-)<0$.

Let us analyze the possible stability switches that can occur at the zeros of $S_n^+(\tau)$, $\tau\in J^+$ and $S_n^-(\tau)$, $\tau\in J^-$, respectively. From case 1, it follows that the first switch occurs at $\tau_1^*$, the first zero of $S_0^+$ and the equilibrium becomes unstable. At $\tau_{min}^-$ the second root $\omega^-$ appears and 
\begin{equation*}
    \lim_{\tau\downarrow\tau_{min}^-}\omega^-(\tau)=0^+\text{ and } \lim_{\tau\downarrow\tau_{min}^- }S_n^-(\tau)=-\infty.
\end{equation*}
Next, we discuss the last eigenvalue crossing. If $\omega^+(\tau)$ and $\omega^-(\tau)$ collide (thus they are not admissible) at $\tau_{max}$, we have that
\begin{equation}\label{eq:limit_Sn_collide}
    \lim_{\tau\uparrow\tau_{max}} S_n^+(\tau)=\lim_{\tau\uparrow\tau_{max}} S_n^-(\tau)\quad\text{for all } n\in\mathbb{N}_0.
\end{equation}
If this limit is positive and $S_n^+$ and $S_n^-$ have sign changes, then the last zero at $\tau^*_m<\tau_{max}$ is at the zero of $S_n^-$. In this case, both $S_n^+$ and $S_n^-$ have an odd number of sign changes, so there are an even number of eigenvalue crossings in total. If the limit in \eqref{eq:limit_Sn_collide} is negative, then $S_n^+$ has an even number of sign changes and $S_n^-$ can have none or an even number for all $n>0$. Thus, the total number of zeros will also be even.

It remains to discuss the case where $\tau_{max}^-<\tau_{max}^+$, that is, only the root $\omega^+$ is feasible for $\tau>\tau_{max}^-$. On the interval $(\tau_{max}^-,\tau_{max}^+)$, we can apply the arguments of case 1 to count the number of sign changes of $S_n^+$. 

To summarize, in case 2 the number of eigenvalue crossings is even. Then Remark~\ref{remark_sign_f_derivative} and the monotonicity properties in \eqref{monotonicity_S} prove the statement of the lemma.

\emph{Case 3.} Suppose $J^+=J^-=(\tau_{min},\tau_{max})$ (or $[\tau_{min},\tau_{max})$), i.e., the roots $\omega^+(\tau)>\omega^-(\tau)>0$ are both feasible. If $\tau_{min}=0$, then $S_n^\pm
(0)<0$ for all $n\in\mathbb{N}_0$ and the monotonicity properties imply that the first switch occurs at the zero of $S_0^+(\tau)$. If $\tau_{min}>0$, then $D(\tau_{min})=0$ and $a_1(\tau_{min})<0$. Hence,
\begin{equation*}
    \lim_{\tau\downarrow\tau_{min}} S_n^+(\tau)=\lim_{\tau\downarrow\tau_{min}} S_n^-(\tau)\quad\text{for all } n\in\mathbb{N}_0,
\end{equation*}
and in the limit $\tau\uparrow\tau_{max}$ \eqref{eq:limit_Sn_collide} holds, so it is easy to check that the total number of eigenvalue crossings is also even in this case, and use the same arguments as before to complete the proof of this lemma.
\end{proof}
    
In the next section, we will use this technique to compute the stability switches for a given set of parameters inspired by pertussis, and different values of $\nu$.

\section{Numerical bifurcation analysis}\label{s:num_bif}
To investigate the dynamical behaviour of the system beyond the equilibria, in this section we perform a numerical bifurcation analysis for the parameter set considered by \cite{lavine2011natural} and \cite{barbarossa2017stability}, which is plausible with a highly infectious disease like pertussis. We take $\r0=15$, $\gamma=17$ (year$^{-1}$), corresponding to an average infectious period of 21 days, and per capita death rate $d=0.02$ (year$^{-1}$), corresponding to an average lifetime of 50 years. The parameter $\beta$ is then fixed through~\eqref{R0} ($\beta=255.3$ year$^{-1}$). 
We perform stability and bifurcation analyses varying the parameters $\tau$ and $\nu$, which represent the maximal duration of immunity in the absence of boosting and the boosting force, respectively. 
Since $\r0>1$, the endemic equilibrium exists for all $\nu$ and $\tau$, with the susceptible population equal to $S^*=\r0^{-1}=\frac{1}{15}$ at equilibrium. 

The numerical continuation is performed with MatCont~7p6 (running on MATLAB). For the approximation of the history in the delay interval we chose a collocation degree of $M=20$ or $M=30$, depending on the required accuracy and the other parameter values. The distributed delay is approximated with a quadrature rule on the same collocation nodes. 
We refer to \cite{breda2016pseudospectral} for further details on the numerical approximation. 

\begin{figure}[ht]
    \centering
    \includegraphics[scale=.9]{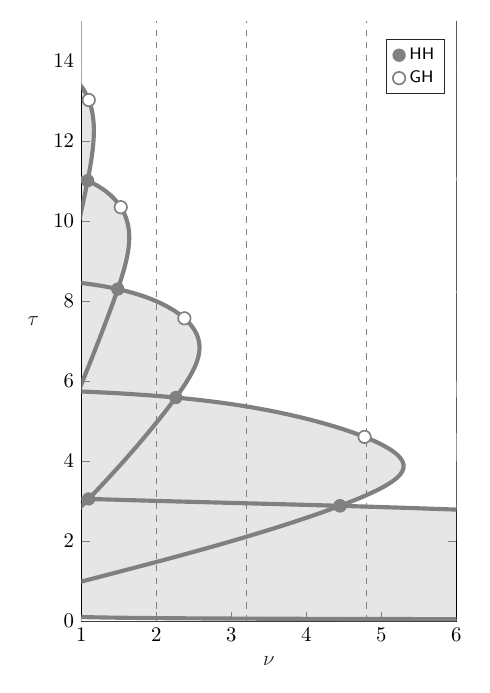}
    \includegraphics[scale=.9]{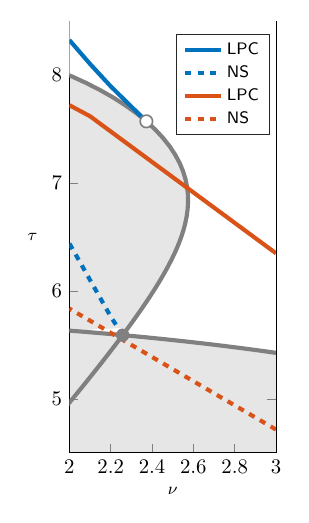}
    \caption{Left: stability of the endemic equilibrium of model \eqref{sys:SISdelay} in the plane $(\nu,\tau)$. Solid lines represent Hopf bifurcation curves, with Hopf-Hopf (HH, $\bullet$) and Generalized Hopf (GH, $\circ$) bifurcations detected along the curves. The endemic equilibrium is unstable in the shaded area. Although not visible in the diagram due to the scaling of the axis, the lowest curves connect at a large value of $\nu$. The vertical dashed lines correspond to the parameters chosen for the one-parameter bifurcation diagrams in the following analyses. 
    Note that an accurate approximation near $\nu=0$ requires large discretization indices and becomes computationally challenging, so we do not show it here. Right: close-up with the bifurcations numerically detected on different branches of limit cycles, delimiting their stability regions.
    }
    \label{fig:stability}
\end{figure}

\paragraph{Stability of the endemic equilibrium}
Figure~\ref{fig:stability} shows the computed stability regions of the endemic equilibrium in the parameter plane $(\nu,\tau)$, which is consistent with the results presented by \citet[Figure 1]{barbarossa2017stability}. 
The endemic equilibrium is stable outside the outermost boundary curve, and unstable inside (shaded region). Note that, as theoretically expected, the equilibrium is always stable on the axis $\tau=0$ (corresponding to an SIS-type model with individuals immediately susceptible upon recovery). 

The boundary separating the stable and unstable regions is the union of different segments of Hopf bifurcation curves corresponding to different pairs of complex eigenvalues crossing transversally the imaginary axis. 
The parameter values at which two different pairs of complex eigenvalues cross the imaginary axis correspond to Hopf-Hopf bifurcations (HH), and lie at the intersection of distinct Hopf curves. 
Moreover, generalized Hopf points (GH) are detected along each curve, corresponding to changes in the criticality of the Hopf bifurcation. 
We refer to \citet[Chapter 8]{kuznetsov1998elements} for further details on these bifurcations. 

GH points are often associated with the emergence of limit points of cycles (LPC) \cite[Section 8.3.3]{kuznetsov1998elements} and hence with bistability of equilibria and periodic solutions. Similarly, HH points are associated with the emergence of Neimark--Sacker bifurcations (NS) and invariant tori \cite[Section 8.6.3]{kuznetsov1998elements}. Figure \ref{fig:stability} therefore suggests the emergence of rich and complex dynamics. We further explore such complex dynamics by studying the one-parameter continuation of the periodic orbits emerging from Hopf and their bifurcations, fixing $\nu = 4.8$, $3.2$, $2$, $1$, and varying the duration of immunity $\tau$ (along the dashed lines indicated in Figure~\ref{fig:stability}). 

\begin{figure}[t]
    \centering
    \includegraphics[scale=.9]{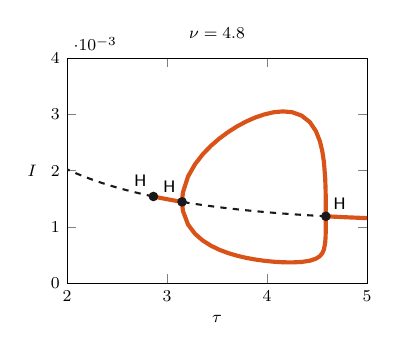}
    \includegraphics[scale=.9]{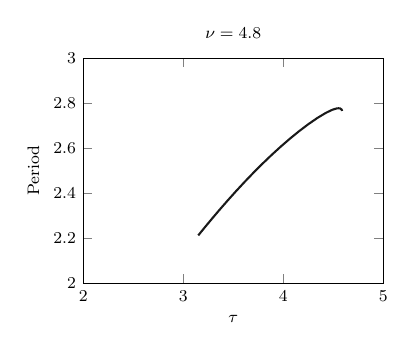}
    \caption{Left: bifurcation diagram of $I$ varying $\tau$, for $\nu=4.8$, including the equilibrium curve with the detected Hopf bifurcations (H, $\bullet$) and the max/min values of the branch of limit cycles. Solid red lines indicate a stable equilibrium/limit cycle, dashed lines are unstable. Right: period of the limit cycle branch (years).}
    \label{fig:4.8}
\end{figure}

\begin{figure}[t]
    \centering
    \includegraphics[scale=.9]{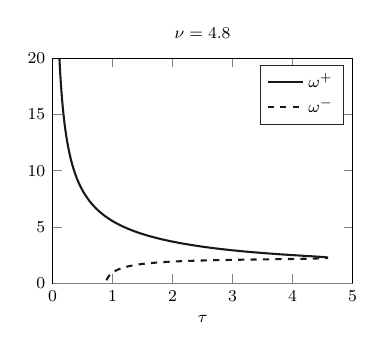}
    \includegraphics[scale=.9]{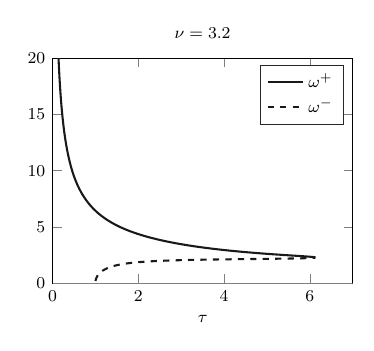}
    \caption{Positive roots $\omega^+(\tau)$ and $\omega^-(\tau)$ of $F(\omega,\tau)$ in \eqref{eq:Fomega} for $\nu=4.8$ (left) and $\nu =3.2$ (right) as $\tau$ varies. }
    \label{fig:omega_pm_nu4.8_3.2}
\end{figure}
 
\paragraph{One-parameter bifurcation diagrams with respect to $\tau$}
Figure \ref{fig:4.8} shows the one-parameter bifurcation diagram with respect to $\tau$ for $\nu=4.8$ (left), and the corresponding estimated period along the branch of limit cycles (right). Four Hopf bifurcations are detected along the equilibrium curve, at approximately $\tau \approx 0.05$, $2.86$, $3.15$, and $4.59$. The equilibrium is unstable between the first two points and between the last two points. A bubble of stable limit cycles exists for $\tau$ between $3.15$ and $4.59$ (both supercritical bifurcations), with a period between 2.2 and 2.8 years. 
When computing the limit cycle branch emerging from $\tau\approx0.05$, the amplitude of the branch diverges quickly, and the numerical continuation stops. This branch is therefore not shown in the figure. 

Figure~\ref{fig:omega_pm_nu4.8_3.2} (left) shows how the roots $\omega^+(\tau)$ and $\omega^-(\tau)$ of $F(\omega,\tau)$ in \eqref{eq:Fomega} vary as a function of $\tau$ in their maximal interval of existence, which for this set of parameters are $J^+=[0,4.59)$ and $J^-=(0.9,4.59)$, respectively. Let us analyze this with reference to Figure~\ref{fig:roots_partition}. When $\tau=0$, we are in the second quadrant of the $(a_0,a_1)$-plane, so $\omega^+(0)>0$ is the only root. As we increase $\tau$, only $a_1(\tau)$ changes sign, which does not affect the existence of $\omega^+$. At $\tau_{min}^-=0.9$, $a_0(\tau_{min}^-)=0$, $a_1(\tau_{min}^-)<0$ and $a_0$ changes sign as we continue to increase $\tau$. Thus we enter region III, so both $\omega^+$ and $\omega^-$ exist up to $\tau_{max}=4.59$, where they collide and then disappear. This is an example of case~2 in the proof of Lemma~\ref{lem:switches}.

Let us analyze the possible stability switches that can occur at the zeros of $S_n^+(\tau)$, $\tau\in J^+$ and $S_n^-(\tau)$, $\tau\in J^-$, respectively. Since $S_0^+(0)<0$, the first switch occurs at $\tau=\tau^*_1$, the first zero of $S_0^+$, destabilizing the endemic equilibrium. Since at $\tau_{max}=4.59$, $\omega^+(\tau)$ and $\omega^-(\tau)$ collide and disappear, we have that
\begin{equation*}
    \lim_{\tau\uparrow\tau_{max}} S_n^+(\tau)=\lim_{\tau\uparrow\tau_{max}} S_n^-(\tau)\quad\text{for all } n\in\mathbb{N}_0.
\end{equation*}
This limit is positive for $n=0,1$, for which $S_n^+$ and $S_n^-$ have a sign change, so both $S_n^+$ and $S_n^-$ have an odd number of zeros, and therefore together they have an even number. The last zero is at $\tau^*_m<\tau_{max}$, the zero of $S_1^-(\tau)$.  

As we can observe in Figure~\ref{fig:eigenvalues}, the numerical computation of the rightmost eigenvalue crossings as $\tau$ varies (bottom left panel), aligns perfectly with the computation of possible stability switches, which are the zeros of $S^\pm_n(\tau)$, $n=0,1$ (top left panel). The zeros were calculated using \eqref{Sn_pm} on the corresponding intervals $J^+$ and $J^-$. This also confirms Lemma~\ref{lem:switches}, which implies that the endemic equilibrium will be stable for all $\tau>\tau_{max}$.

\begin{figure}[tp]
    \centering
    \includegraphics[scale=.9]{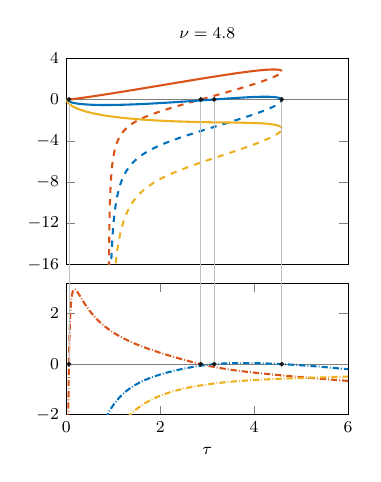}
    \includegraphics[scale=.9]{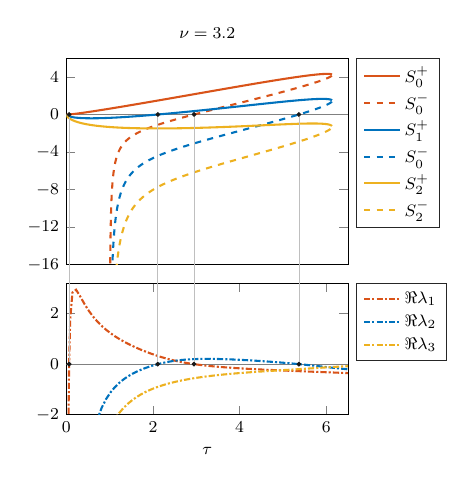}
    \caption{Top: graph of the stability switch functions $S_n^\pm (\tau)$ for $n=0,1,2$, where $\nu=4.8$ and $3.2$, respectively. Bottom: corresponding real part of the rightmost eigenvalues along the equilibrium branch, varying $\tau$. Black dots indicate Hopf bifurcations.}
    \label{fig:eigenvalues}
\end{figure}

To illustrate the emergence of bistability through the GH point, Figure~\ref{fig:3.2} shows the one-parameter continuation for $\nu=3.2$. In this case, we still observe a bubble of limit cycles for $\tau$ between the Hopf points~$2.11$ and $5.37$ (first supercritical, second subcritical). An LPC and an NS bifurcations are detected on the branch of limit cycles, and the limit cycles are stable between those bifurcations. 
Bistability occurs for values of $\tau$ between the Hopf and LPC bifurcations (i.e., for $\tau$ between $5.37$ and $6.16$), where a stable limit cycle coexists with the stable endemic equilibrium, so the asymptotic behaviour of the system depends on the initial condition. 

Note that, once again, the Hopf bifurcations correspond to those found analytically: in Figure~\ref{fig:omega_pm_nu4.8_3.2} (right) the roots $\omega^+(\tau)$ and $\omega^-(\tau)$ are plotted as functions of $\tau$ on their maximal interval of existence, i.e., $J^+=[0,6.13]$, $J^-=(1,6.13]$. The stability switches and rightmost eigenvalue crossings can be observed in the right panel of Figure~\ref{fig:eigenvalues}.

\begin{figure}[t]
    \centering
    \includegraphics[scale=.9]{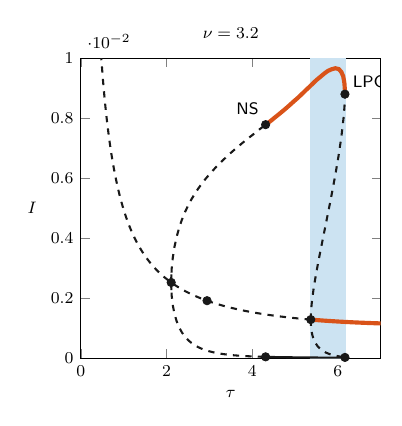}
    \includegraphics[scale=.9]{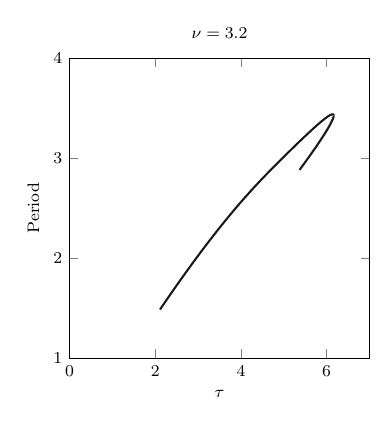}
    \caption{Left: bifurcation diagram of $I$ varying $\tau$, for $\nu=3.2$. Solid red lines indicate stable equilibrium/limit cycles; dashed lines indicate unstable. NS and LPC labels indicate Neimark-Sacker and limit point of cycles bifurcations, respectively. The blue shading shows the interval in which a stable equilibrium coexists with a stable limit cycle, for $\tau$ between $5.37$ and $6.16$. 
    Right: period along the limit cycle branch (years). 
    }
    \label{fig:3.2}
\end{figure}
\begin{figure}[t]
    \centering
    \includegraphics[scale=.9]{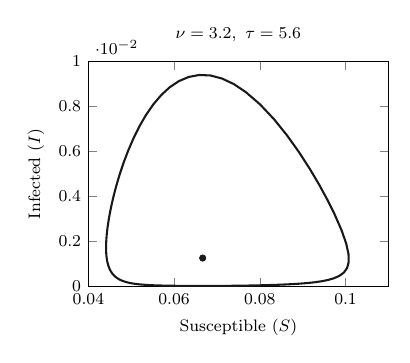}
    \includegraphics[scale=.9]{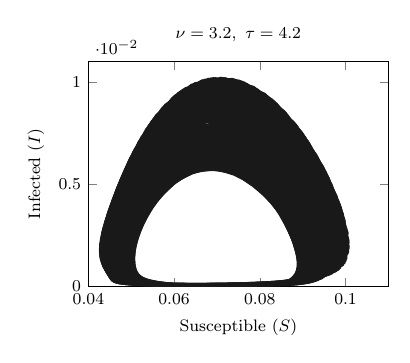}
    \caption{Left: bistability of the positive equilibrium and the stable limit cycle for $\nu=3.2$ and $\tau=5.6$. Right: emergence of a stable torus via a Neimark--Sacker bifurcation, for $\nu=3.2$ and $\tau=4.2$.}
    \label{fig:3.2_NS}
\end{figure}

Figure~\ref{fig:3.2_NS} shows the emergence of a stable invariant torus through the NS bifurcation: the long-term orbits in the plane $(S,I)$ are plotted for values of $\tau$ taken on opposite sides of the NS ($\tau=5.6$ and $\tau=4.2$). The left panel shows the coexisting stable equilibrium and limit cycle, while the right panel shows the orbit of a stable invariant torus.

An even more complex dynamical picture is shown in Figure \ref{fig:2} for $\nu=2$, with two different closed limit cycle curves connecting different pairs of Hopf bifurcations, one with larger amplitude and larger period than the other. 
Each branch is stable in an interval of values between an LPC and an NS bifurcation. 
Two regions of bistability appear: one with a stable equilibrium coexisting with a stable limit cycle, and one with two different stable limit cycles. These regions are shaded in Figure \ref{fig:2}. 
Figure \ref{fig:2_trajectory} shows the orbits of the limit cycles in the small- and large-amplitude branches, as well as their time profiles. The figure highlights the differences in both amplitude and period. 
Following the LPC and NS curves for each bubble of limit cycles in the plane $(\nu,\tau)$ one can get some insights about regions of parameters where two stable periodic solutions coexist, or one stable periodic solution coexists with the stable endemic equilibrium. These curves are sketched in the right panel in Figure \ref{fig:stability}.

\begin{figure}[tp]
    \centering
    \includegraphics[scale=.9]{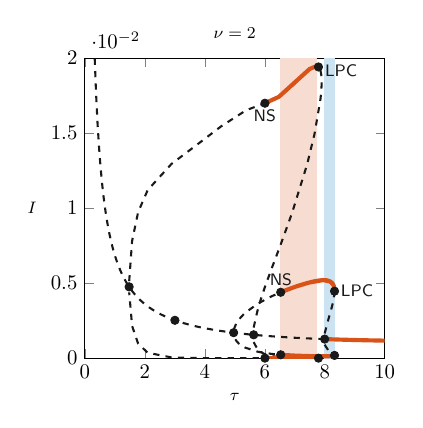}
    \includegraphics[scale=.9]{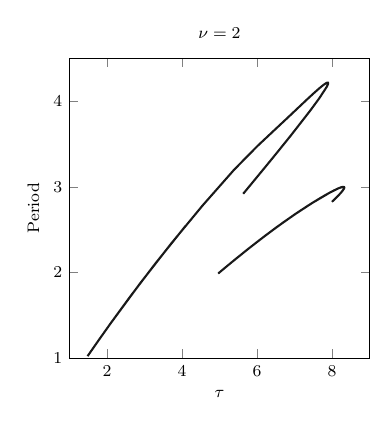}
    \caption{Left: bifurcation diagram of $I$ varying $\tau$, for $\nu=2$. Solid red lines indicate stable equilibrium/limit cycles; dashed indicate unstable. 
    Right: period of the limit cycle along the branches.
    The figure shows two intervals of bistability: the blue shading indicates bistability of an equilibrium and limit cycle, while the red shading indicates bistability of two limit cycles differing in amplitude and period. 
    }
    \label{fig:2}
\end{figure}

\begin{figure}[tp]
    \includegraphics[scale=.9]{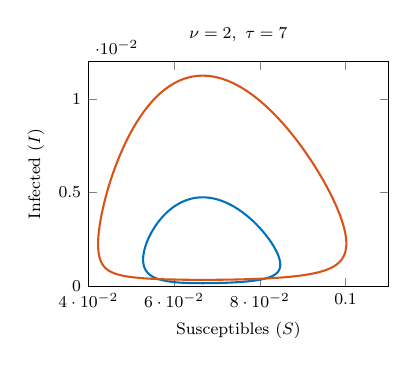}
    \includegraphics[scale=.9]{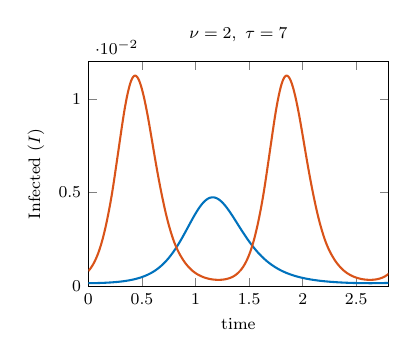}
    \caption{Orbits (left) and trajectories (right) of the stable coexisting limit cycles, for $\nu=2$ and $\tau=7$. Periods are approximately $1.41$ years (blue) and $2.67$ years (red).
    }
    \label{fig:2_trajectory}
\end{figure}

Figure~\ref{fig:omega_pm_nu2} shows the $\omega^+$ and $\omega^-$ curves expected from the analysis with $\nu=2$, as well as the stability switch curves that predict the Hopf bifurcation points. 

\begin{figure}
    \centering
    \includegraphics[scale = 0.9]{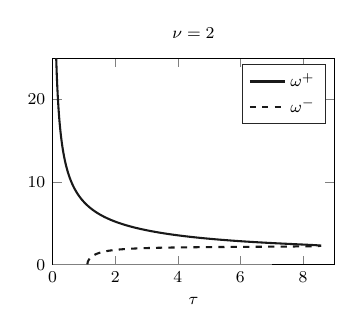}
    \includegraphics[scale = 0.9]{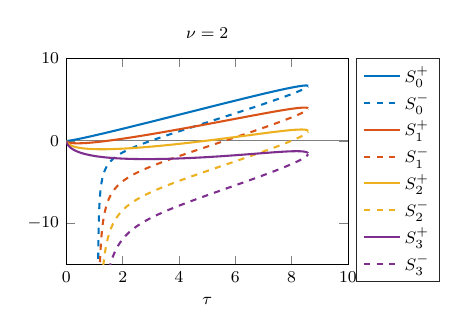}
    \caption{For $\nu=2$, the positive roots $\omega^+(\tau)$ and $\omega^-(\tau)$ of $F(\omega,\tau)$ in \eqref{eq:Fomega} (left) and the graph of the stability switch functions $S_n^\pm (\tau)$ for $n=0,1,2,3$ (right). 
    }
    \label{fig:omega_pm_nu2}
\end{figure}

\begin{figure}[tp]
    \includegraphics[scale=.9]{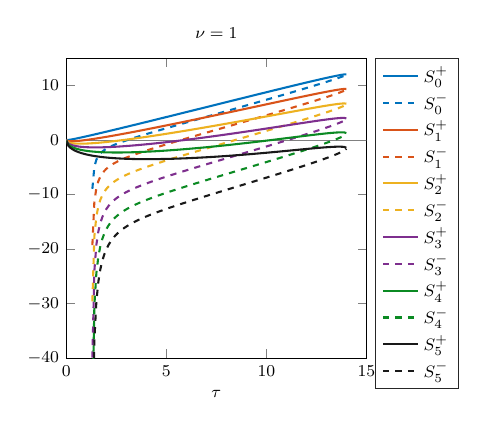}
    \includegraphics[scale=.9]{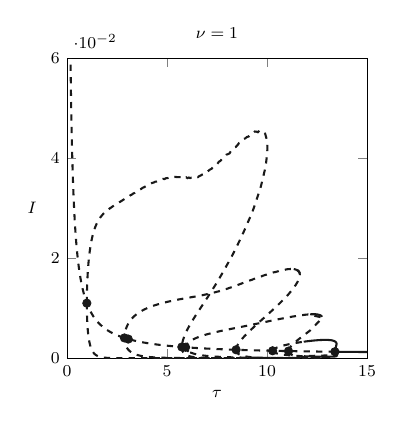}
    \caption{Case $\nu=1$. Left: stability switch functions $S_n^\pm(\tau)$. Right: bifurcation diagram, with dashed lines indicating unstable branches. 
    }
    \label{fig:1}
\end{figure}

As $\nu$ becomes smaller, the bifurcation diagrams become more and more complicated, with several nested limit cycle bubbles emerging from longer sequences of Hopf bifurcations. Figure \ref{fig:1} shows the stability switch curves for $\nu=1$, and the corresponding bifurcation diagram. In the left panel, we can see five different curves crossing the horizontal axis, giving rise to ten Hopf bifurcations. The last bifurcation, which corresponds to a stability switch from unstable to stable, is at approximately $\tau\approx 13.39$. The four rightmost bubbles of limit cycles are shown in the right panel. As before, the branch of limit cycles emerging from the smallest Hopf point is not shown, as it quickly diverges and the computation stops.

\begin{figure}[tp]
    \centering
    \includegraphics[scale=0.9]{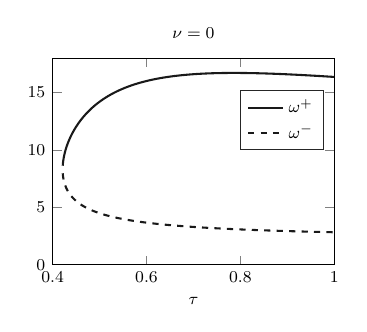}
     \includegraphics[scale=0.9]{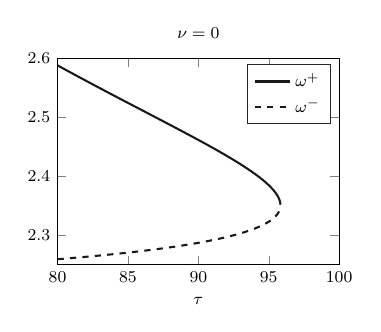}
    \\
    \includegraphics[scale=0.9]{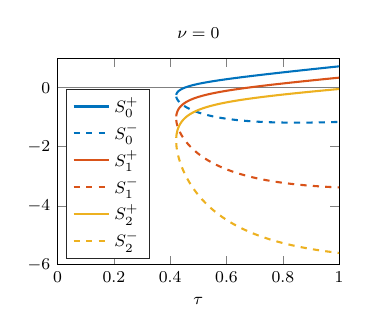}
    \includegraphics[scale=0.9]{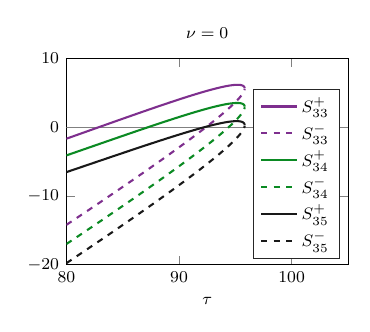}
    \caption{Case $\nu=0$. Top row: the roots $\omega^-(\tau)$ and $\omega^+(\tau)$ of $F(\omega(\tau),\tau)$ as functions of $\tau$ when $\tau$ is small, close to $\tau_{min}$, and large, close to $\tau_{max}$. Bottom row: the stability switches in the corresponding intervals. The last switch, when the equilibrium becomes stable, is the zero of $S_{35}^-$ at $\tau\approx 95.805$.}\label{fig:Sn_nu0}
\end{figure}

\begin{figure}[tp]
\centering
    \includegraphics[scale=.9]{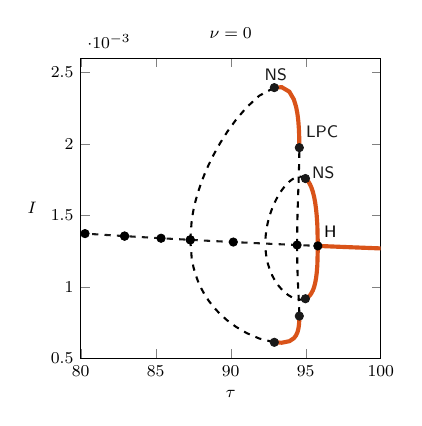}
    \includegraphics[scale=.9]{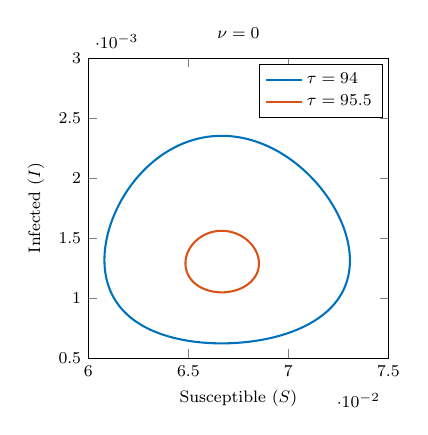}
    \caption{
    Case $\nu=0$. Left: bifurcation diagram of $I$ (rightmost Hopf bifurcations) obtained with DDE-BIFTOOL, showing two regions of stability on different branches of periodic orbits. Right: stable limit cycles with different amplitudes and periods, obtained via long-term time integration using \texttt{dde23}. 
    }
    \label{fig:nu0_bif}
\end{figure}

\subsection{The case with no boosting}
When $\nu=0$ (no boosting) the system reduces to the delayed SIRS model
\begin{align*}
    \dot S(t) &= d(1-S(t))-\beta I(t)S(t)
        +\gamma e^{-d\tau}I(t-\tau) \\
    \dot I(t) &= \beta I(t)S(t)-(\gamma + d)I(t).
\end{align*}
Similar models with a fixed immunity period and no boosting of immunity have been considered for instance by \cite{hethcote1981nonlinear}, \cite{taylor2009sir}, and more recently applied to COVID-19 transmission by \cite{pell2023emergence}.

Since the unique endemic equilibrium 
\begin{align*}
    I^* &= \frac{d(\beta - \gamma - d)}{\beta (\gamma+d-\gamma e^{-d\tau})},  \quad S^* = \frac{\gamma+d}{\beta}
\end{align*}
depends on $\tau$ explicitly, the calculation of stability switches in Section~\ref{sec:stability switches} becomes somewhat easier in this case. We seek zeros $\omega^+(\tau)>\omega^-(\tau)>0$ of $F(\omega,\tau)$ in \eqref{eq:Fomega}, where the coefficients are 
\begin{align*}
    a_1(\tau) & =  d^2+ (\beta I^*)^2 - 2\gamma \beta I^*,\\
    a_0(\tau) & = (\beta I^*)^2 (\gamma+d-\gamma e^{-\tau d})(\gamma+d+\gamma e^{-\tau d}).
\end{align*}
Since $a_0(\tau)>0$ for all $\tau$ and all parameters in the system, $F$ has two or no positive zeros. Hence, $\omega^+(\tau)$ and $\omega^-(\tau)$ are both feasible when $\tau\in J^+ = J^- = J$, where 
\begin{equation*}
    J=\left\{\tau : \tau>0 : a_1^2(\tau)-4a_0(\tau)>0 \text{ and } a_1(\tau)<0\right\}.
\end{equation*}
This corresponds to case 3 in the proof of Lemma~\ref{lem:switches}. With our set of parameters, $J=(0.421,95.83)$. Since this interval is very large, we have covered only the region close to $\tau_{min}$ and $\tau_{max}$, where $\omega^\pm$ are feasible and collide at both ends, see Figure~\ref{fig:Sn_nu0} (top panels). There are $72$ possible stability switches, so in Figure~\ref{fig:Sn_nu0} (bottom panels) we show only the first and last few $S_n^\pm(\tau)$ curves. The last switch is at the zero of $S_{35}^-$ at $\tau\approx 95.80$, Figure~\ref{fig:Sn_nu0} (bottom right).

Since the model has only one discrete delay, we performed the bifurcation analysis with the package DDE-BIFTOOL for MATLAB \citep{ddebiftool}. Figure \ref{fig:nu0_bif} (left) shows the rightmost Hopf bifurcations and the two rightmost limit cycle bubbles. In this case, we did not detect bistability, but two stable limit cycles with different amplitudes exist for parameter values that are in a relatively small interval around $\tau=95$ years.
The right panel in Figure \ref{fig:nu0_bif} shows the stable orbits in the $(S,I)$ plane, for $\tau=94$ (large amplitude) and $\tau=95.5$ (small amplitude). The convergence to the stable limit cycles was confirmed by performing a long-time integration of the system using the built-in function \texttt{dde23} for MATLAB. 

Finally, we stress that, due to the large values of the delay at the stability switch when $\nu=0$, the numerical analysis with MatCont becomes challenging as large discretization indices are needed to obtain sufficient accuracy. 
Figure \ref{fig:error} compares the approximation error of the rightmost Hopf bifurcation detected by MatCont, for $\nu=3.2$ and $\nu=0$, depending on the degree $M$ of the collocation polynomial used in the approximation of the delay system in MatCont~7p6. The right panel shows that a collocation degree of the order of $150$ is needed to approximate the Hopf bifurcation for $\nu=0$, which occurs at approximately $\tau\approx95.80$. This is much larger than the collocation degree required to approximate the Hopf bifurcations for larger $\nu$ (left panel), which is of the order of $20$. 

\begin{figure}[p]
    \centering
    \includegraphics[scale=.9]{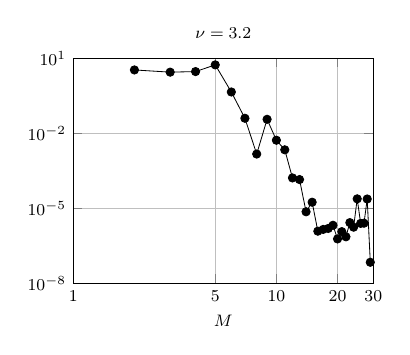}
    \includegraphics[scale=.9]{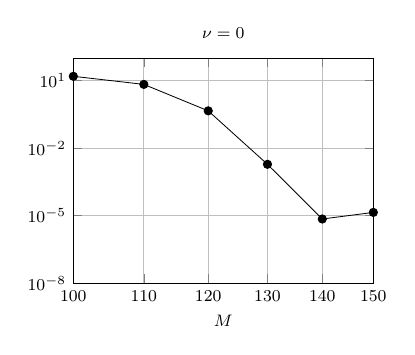}
    \caption{Log-log plot of the approximation error of the rightmost Hopf point for $\nu=3.2$ (left) and $\nu=0$ (right), depending on the collocation degree $M$ used in the approximation of the delayed state in MatCont~7p6. 
    The error is computed with respect to the numerical value obtained with $M=31$ and $M=160$, respectively. The computation is carried out using the default MatCont tolerance $10^{-6}$, explaining the error barrier. 
    }
    \label{fig:error}
\end{figure}

\begin{figure}[p]
    \centering
    \includegraphics[scale=.7]{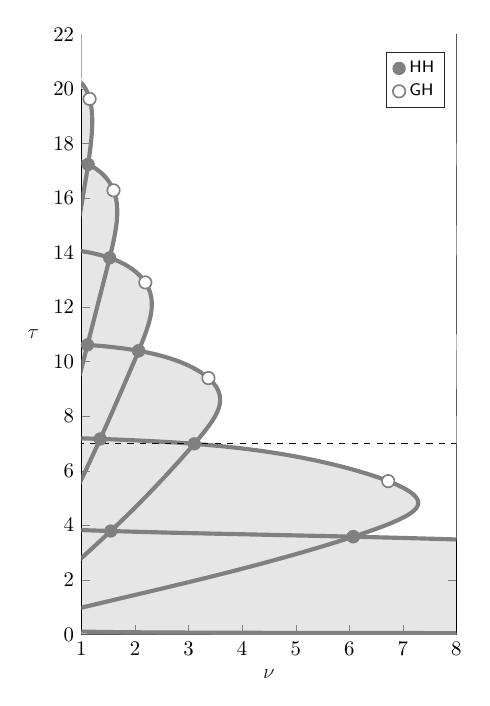}
    \includegraphics[scale=.7]{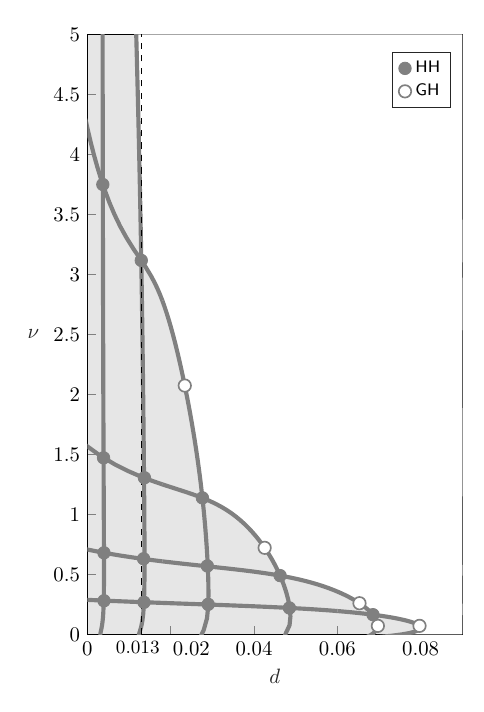}
    \caption{
    Left: stability of the endemic equilibrium of model \eqref{sys:SISdelay} in the plane $(\nu,\tau)$, with same parameters as in Figure~\ref{fig:stability}, but the per capita mortality rate now fixed at $d=0.013$, corresponding to an expected lifespan of approximately 75 years. Right: stability of the endemic equilibrium in the plane $(d,\nu)$, for fixed $\tau=7$.
    }
    \label{fig:d0013}
\end{figure}

\subsection{Impact of life expectancy}

The previous analyses were carried out for fixed demographic parameters, and in particular for a rather short life expectancy (50 years). However, life expectancy is known to play an important role in the emergence of oscillatory dynamics: using a compartmental SIRWS model, \cite{dafilis_frascoli_wood_mccaw_2012} showed that increasing life expectancy can cause sustained oscillations in the presence of waning and boosting of immunity. 
With this motivation, we conclude this study with a brief discussion on the sensitivity of our stability results to the life expectancy of individuals.

The left panel of Figure~\ref{fig:d0013} shows the stability and instability regions of the endemic equilibrium in the parameter plane $(\tau,\nu)$ (similarly as Figure~\ref{fig:stability}) when the per capita death rate is $d=0.013$ years$^{-1}$, corresponding to an average lifespan of approximately $75$ years. 
While the qualitative picture remains the same, decreasing the mortality rate $d$ enlarges the region of instability compared to Figure \ref{fig:stability} (left). The destabilizing effect of increasing life expectancy is also evident from the right panel of Figure \ref{fig:d0013}, which shows the stability and instability regions in the plane $(d,\nu)$, for fixed $\tau=7$ years and $\beta = 255.3$ years$^{-1}$, so that $\mathcal{R}_0$ varies with $d$ according to \eqref{R0}. We observe that, as $d$ decreases, the system undergoes a Hopf bifurcation with the emergence of stable periodic solutions. 
These results qualitatively agree with the analyses by \cite{dafilis_frascoli_wood_mccaw_2012} (see for instance Figure 4 in that reference).

\section{Discussion and conclusions}
In this paper, we extended the analysis of the dynamics and bifurcations of the model with waning and boosting of immunity proposed by \cite{barbarossa2017stability}, formulated as a system of equations with discrete and distributed delays. 
Bifurcation analyses for such models are scarce in the literature, due to the technicalities and complexities in the calculations. Our analysis was performed with a recently developed software release, MatCont~7p6 for MATLAB \citep{liessi2025matcont}.

For parameter values describing a highly infectious disease like pertussis, our analysis uncovered a much more complex dynamical picture than previous ones, including catastrophic bifurcations and parameter regions with bistability of different attractors. 
While bistability in waning-boosting models has been observed previously (for instance \cite{dafilis_frascoli_wood_mccaw_2012} and \cite{opoku-sarkodie_bifurcation_2024}), our work is the first to analyse a delayed model and find evidence of bistability not only of the endemic equilibrium and a limit cycle, but also of two distinct stable limit cycles differing in amplitude and period, for the same parameter values. 

Catastrophic bifurcations and bistability have practical relevance in public health, as disturbances of the system's state or parameters by non-pharmaceutical interventions can push the system to a different basin of attraction, hence potentially changing its long-term dynamics. Pertussis transmission, for instance, is highly influenced by waning-boosting dynamics, and post-pandemic outbreaks have sparked significant concern \citep{kang2024pertussis}.

Our work helps to better understand the role of duration of immunity and boosting force in shaping disease dynamics, further corroborating the evidence that waning-boosting infection dynamics is highly nonlinear, extremely complex, and public health interventions may have unintended consequences. 

\section*{Acknowledgments}
The research of FS was supported by the Engineering and Physical Sciences Research Council via the Mathematical Sciences Small Grant UKRI170: ``The dynamics of waning and boosting of immunity: new modelling and numerical tools''. FS is a member of the Computational Dynamics Laboratory (CDLab, University of Udine), of INdAM research group GNCS, of UMI research group Mo\-del\-li\-sti\-ca Socio-Epidemiologica, and of JUNIPER (Joint UNIversities Pandemic and Epidemiological Research).

The research of MP and GR was completed in the National Laboratory for Health Security, RRF-2.3.1-21-2022-00006. Additionally, their work was supported by the Ministry of Innovation and Technology of Hungary from the National Research, Development and Innovation Fund project no. TKP2021-NVA-09 and KKP 129877.

FS is grateful to Stefan Ruschel and Jan Sieber for helpful insights on the analysis with DDE-BIFTOOL. 

\section*{Data availability} 
No external data has been used to generate the results. The simulations are obtained using MATLAB 2023b and the free software packages MatCont version~7p6 and DDE-BIFTOOL version~3.2a. 

\bibliography{bibliography}

\end{document}